
\documentclass[a4paper,reqno]{amsart}

\usepackage{amssymb}
\usepackage{latexsym}
\usepackage{amsmath}
\usepackage[mathcal]{euscript}
\usepackage{bbm}
\usepackage{tikz}
\usepackage{color}
\usepackage{pifont}

\def\cG{{\mathcal G}}      
      
      \def\cO{{\mathcal O}}

\def\cal H{{\mathcal H}}

\def\R{\mathbb{R}}
\def\C{\mathbb{C}}

\def\dom{{\text{\rm dom\,}}}

\def\phi{\varphi}

\DeclareMathOperator{\Res}{Res}

\DeclareMathOperator{\Imag}{Im}
\DeclareMathOperator{\spann}{span}
\DeclareMathOperator{\rank}{rank}


\newtheorem{theorem}{Theorem}[section]
\newtheorem*{thm*}{Theorem}
\newtheorem{proposition}[theorem]{Proposition}
\newtheorem{corollary}[theorem]{Corollary}
\newtheorem{lemma}[theorem]{Lemma}

\theoremstyle{definition}
\newtheorem{definition}[theorem]{Definition}
\newtheorem{example}[theorem]{Example}

\newtheorem{remark}[theorem]{Remark}
\newtheorem*{ack}{Acknowledgement}

\numberwithin{equation}{section}

\title{Visibility of quantum graph spectrum from the vertices}

\author[C.~K\"uhn]{Christian K\"uhn}
\address{TU Hamburg \\ Institut f\"ur Mathematik \\
Am Schwarzenberg-Campus~3 \\
Geb\"aude E \\
21073 Hamburg \\
Germany}
\email{christian.kuehn@tuhh.de}

\author[J.~Rohleder]{Jonathan Rohleder}
\address{Matematiska institutionen\\ Stockholms universitet \\
106 91 Stockholm \\
Sweden}
\email{jonathan.rohleder@math.su.se}

\begin{document}

\begin{abstract}
We investigate the relation between the eigenvalues of the Laplacian with Kirchhoff vertex conditions on a finite metric graph and a corresponding Titchmarsh--Weyl function (a parameter-dependent Neumann-to-Dirichlet map). We give a complete description of all real resonances, including multiplicities, in terms of the edge lengths and the connectivity of the graph, and apply it to characterize all eigenvalues which are visible for the Titchmarsh--Weyl function.
\end{abstract}

\maketitle

\section{Introduction}

Inverse problems for quantum graphs, i.e.\ differential operators on metric graphs, is an area which has attracted much interest during the last decade. Particular attention was paid to the problem of recovering a differential operator from the Titchmarsh--Weyl function on a selected set of vertices, see~\cite{AK08,B04,BW05,EK11,FY07,K09,K11,K13,R15,Y05,Y12}. The Titchmarsh--Weyl function is a parameter-dependent Dirichlet-to-Neumann or, equivalently, Neumann-to-Dirichlet map; it corresponds to ``boundary'' measurements and can be calculated from scattering data, see, e.g.,~\cite[Section~5.4]{BK13}. In contrast to the case of ordinary or partial differential operators the Titchmarsh--Weyl function for a quantum graph does not contain the full information on the differential operator on a metric graph $G$. This effect is caused by the existence of real resonances for the non-compact graph obtained from attaching infinite leads to the vertices of $G$; they materialize through eigenfunctions which vanish at all vertices of the graph. Their existence can also be understood as a lack of a unique continuation property for quantum graphs.

In this note we consider as a model operator the Laplacian $- \Delta_G$ on a finite metric graph $G$ subject to Kirchhoff (also sometimes called standard) matching conditions on all vertices; see Section~\ref{sec:preliminaries} for the details. Our aim is to study the relation between the (purely discrete) spectrum $\sigma (- \Delta_G)$ of $- \Delta_G$ and the poles of the Titchmarsh--Weyl function for a sufficiently large set of vertices. For any subset $B = \{v_1, \dots, v_m\}$ of the vertex set of $G$ the matrix-valued Titchmarsh--Weyl function can be defined via the relation 
\begin{align*}
 M_B (\mu) \begin{pmatrix} \partial_\nu f (v_1) \\ \vdots \\ \partial_\nu f (v_m) \end{pmatrix} = \begin{pmatrix} f (v_1) \\ \vdots \\ f(v_m) \end{pmatrix}, \quad \mu \in \C \setminus \sigma (- \Delta_G),
\end{align*}
where $f$ is any solution of $-f'' = \mu f$ on $G$ which is continuous on each vertex and satisfies the Kirchhoff condition on any vertex which does not belong to $B$; here $\partial_\nu f (v)$ is the sum of the derivatives at a vertex $v$ of the restrictions of $f$ to the edges incident to $v$. There is a close relation between the eigenvalues of $- \Delta_G$ and the poles of the meromorphic matrix function $\mu \mapsto M_B (\mu)$, see, e.g.,~\cite[Section~3.5.3]{BK13}. In fact, each pole of $M_B$ is an eigenvalue of $- \Delta_G$, but in general the converse is not true if cycles with commensurate edges are present, see, e.g.,~\cite{R15}.

In this note we focus on the case where the vertex set $B$ is rather large. We assume that $B$ contains all vertices of degree one as well as all {\em proper core vertices}; cf.~Definition~\ref{def:core}. In the main result Theorem~\ref{thm:visibility} of this note it turns out that exactly those eigenvalues $\lambda$ of $- \Delta_G$ appear as poles of $M_B$ for which 
\begin{align}\label{eq:condition}
 \dim \ker (- \Delta_G - \lambda) > \beta_1 (G_\lambda) - \beta_0^{\rm odd} (G_\lambda)
\end{align}
holds, where $G_\lambda$ is the subgraph of $G$ consisting of all edges whose lengths are natural multiples of $\pi / \sqrt{\lambda}$, $\beta_1 (G_\lambda)$ is the first Betti number (the number of independent cycles) of $G_\lambda$ and $\beta_0^{\rm odd} (G_\lambda)$ is the number of connected components of $G_\lambda$ which contain a cycle whose length is an odd multiple of $\pi / \sqrt{\lambda}$; cf.~Example~\ref{ex:example2} below. Moreover, an eigenvalue $\lambda$ is visible for the Titchmarsh--Weyl function with its full multiplicity if and only if $\beta_1 (G_\lambda) = \beta_0^{\rm odd} (G_\lambda)$, that is, each connected component of $G_\lambda$ contains at most one cycle and the length of this cycle (if it exists) is an odd multiple of $\pi / \sqrt{\lambda}$. In Example~\ref{ex:loop} we show that the condition~\eqref{eq:condition} can be violated, i.e., it may happen that eigenvalues of $- \Delta_G$ are totaly invisible for the Titchmarsh--Weyl function, even if $B$ contains all vertices of $G$.


To prove our main result, we trace the condition~\eqref{eq:condition} back to considerations on resonances of $G$. The core piece of our proof is Theorem~\ref{thm:general}, which states that a real point $\lambda > 0$ is a resonance of $G$ if and only if $\beta_1 (G_\lambda) > \beta_0^{\rm odd} (G_\lambda)$ holds, and that the dimension of the corresponding resonance eigenspace equals $\beta_1 (G_\lambda) - \beta_0^{\rm odd} (G_\lambda)$; this may be of independent interest. 

We would like to mention that a connection between the existence of resonance eigenfunctions (so-called scars) and cycles with commensurate edges was already observed in~\cite{SK03}. For further work related to resonances and scars for quantum graphs we refer the reader to~\cite{BKW04,C14,E14,EL10,GSS13,KS04,TM01,WS13} and the references therein. Moreover, we refer the reader to~\cite{BL16,F05} for genericity of simple eigenvalues and corresponding properties of edge lengths.

The present note is organized as follows. In Section~\ref{sec:preliminaries} we provide preliminaries on metric graphs, Laplacians and resonances. Section~\ref{sec:resonances} contains Theorem~\ref{thm:general} on the characterization of all resonances including multiplicities. Finally, in Section~\ref{sec:M} we discuss the relation to the Titchmarsh--Weyl function and provide our main result Theorem~\ref{thm:visibility}.

\begin{ack}
The authors gratefully acknowledge financial support by the Austrian Science Fund (FWF), project P~25162-N26. Moreover, the authors are grateful to Pavel Kurasov: a consideration on eigenfunctions of equilateral graphs in a preliminary version of his forthcoming book on quantum graphs inspired the present, simple form of the proof of Theorem~\ref{thm:general}. Finally, the authors would like to thank Johannes Cuno and Olaf Post for stimulating discussions and helpful remarks.
\end{ack}

\section{Preliminaries}\label{sec:preliminaries}

Let us shortly recall some fundamentals on Laplacians on metric graphs; for more details see the recent monographs~\cite{BK13,P12} or the survey~\cite{K08}; for the standard notions from graph theory, see, e.g.,~\cite{BM76}. A {\em finite metric graph} is a graph $G$, built from a finite set $V$ of vertices and a finite set $E$ of edges, together with a length function $L : E \to (0, \infty)$ such that each edge $e \in E$ is identified with the interval $[0, L (e)]$, where the endpoints $0$ and $L (e)$ are identified with the two vertices to which $e$ is incident. In the following all graphs are metric graphs and we just write $e = [0, L (e)]$ for any edge $e$. For an edge $e \in E$ and a vertex $v \in V$ we write $v = o (e)$ or $v = t (e)$ if the left or right endpoint of $e$, respectively, is identified with the vertex~$v$ and say that $e$ originates from or terminates at $v$, respectively. Note that if $e$ is a loop then $o (e) = t (e)$ holds. Let us recall, furthermore, that the {\em core} of a (discrete or metric) graph $G$ is the largest subgraph of $G$ which does not have any vertices of degree one.

Let $G$ be a finite metric graph. On $G$ we consider the Hilbert space $L^2 (G)$ of (equivalence classes of) square-integrable functions $f : G \to \C$, equipped with the standard norm and inner product. For any $f \in L^2 (G)$ and any edge $e \in E$ we denote by $f_e$ the restriction of $f$ to $e$, and we identify $f_e$ with a function on $[0, L (e)]$. Moreover, we write
\begin{align*}
 \widetilde H^k (G) = \left\{ f \in L^2 (G) : f_e \in H^k (e), e \in E \right\}, \quad k = 1, 2, \dots,
\end{align*}
where $H^k (e)$ is the Sobolev space of order $k$ on $(0, L (e))$. Note that any $f \in \widetilde H^k (G)$ is continuous on $(0, L (e))$ and $f |_{(0, L (e))}$ has a continuous extension to $[0, L (e)]$ for each $e \in E$. We say that a function $f \in \widetilde H^1 (G)$ is {\em continuous on $G$} if for each two edges $e$ and $\hat e$ which are incident to a joint vertex $v$ the limits of $f_e$ and $f_{\hat e}$ at~$v$ coincide. Accordingly we define
\begin{align*}
 H^1 (G) = \left\{ f \in \widetilde H^1 (G) : f~\text{is~continuous~on}~G \right\}.
\end{align*}
For $f \in H^1 (G)$ we write $f (v)$ for the limit of $f_e$ at $v$ for an arbitrary edge $e$ incident to $v$. Moreover, for $f \in H^1 (G) \cap \widetilde H^2 (G)$ we use the abbreviation
\begin{align*}
 \partial_\nu f (v) = \sum_{t (e) = v} f'_e (L (e)) - \sum_{o (e) = v} f'_e (0), \quad v \in V,
\end{align*}
for the sum of the derivatives pointing towards the vertex; here the evaluations of the derivatives must be understood as limits. 

The differential operator under consideration in this note is the Laplacian $- \Delta_G$ in~$L^2 (G)$ equipped with Kirchhoff (or standard) vertex conditions, i.e.\
\begin{align*}
 (- \Delta_G f)_e & = - f''_e, \quad e \in E, \\
 \dom (- \Delta_G) & = \left\{ f \in H^1 (G) \cap \widetilde H^2 (G) : \partial_\nu f (v) = 0, v \in V \right\}.
\end{align*}
The operator $- \Delta_G$ is selfadjoint in $L^2 (G)$ and its spectrum $\sigma (- \Delta_G)$ is nonnegative and consists of isolated eigenvalues with finite multiplicities; the smallest eigenvalue of $- \Delta_G$ equals zero and corresponds to those eigenfunctions which are constant on~$G$.

In the following the notion of a (real) resonance of $G$ plays an important role. We make the following definition.

\begin{definition}\label{def:resonance}
Let $- \Delta_G$ be the Laplacian in $L^2 (G)$ with Kirchhoff vertex conditions. A number $\lambda \in \R$ is called {\em (real) resonance of $G$} if there exists a nontrivial function $f \in \ker (- \Delta_G - \lambda)$ such that $f (v) = 0$ holds for each vertex $v$ of $G$. Each such $f$ is called {\em resonance eigenfunction}.
\end{definition}

Some remarks are in order. The classical definition of a resonance for (the Laplacian on) a metric graph refers to graphs which contain infinite leads, i.e.\ edges of infinite length attached to some vertices; see, e.g.,~\cite{DP11}. A resonance for such a non-compact graph is defined as a number $k \in \C \setminus \{0\}$ such that there exists a nontrivial solution $f$ of $- f'' = k^2 f$ on $G$ which satisfies the Kirchhoff condition at each vertex and is a scalar multiple of $e^{i k x}$ on each infinite lead. It can be seen easily that a resonance must satisfy $\Imag k \leq 0$, and it may happen that a resonance $k$ is real. If $G$ is a finite metric graph and $\widetilde G$ is the non-compact graph obtained from attaching infinite leads to all vertices of $G$ then it can be shown that $k$ is a real resonance of $\widetilde G$ in the classical sense if and only if $\lambda = k^2$ is a resonance of $G$ in the sense of Definition~\ref{def:resonance}. We work with $\lambda = k^2$ instead of $k$ since for our purposes this choice is more convenient. Note that also with respect to our definition a resonance of $G$ is nonzero (and, thus, positive) as the eigenfunctions of $- \Delta_G$ corresponding to $\lambda = 0$ are necessarily different from zero at each vertex.

\section{Resonances on the real line}\label{sec:resonances}

In this section we provide a characterization of all real resonances and their multiplicities of a finite metric graph $G$ in terms of its connectivity and algebraic properties of the edge lengths. In the statement of the following theorem, for a finite metric graph $G$ we denote its (first) Betti number by $\beta_1 (G)$, i.e., $\beta_1 (G) = |E| - |V| + \beta_0 (G)$, where $\beta_0 (G)$ is the number of connected components; recall that $\beta_1 (G)$ expresses the number of independent cycles in $G$. Moreover, for each $\lambda > 0$ we write
\begin{align*}
 R (G, \lambda) := \big\{ f \in \ker (- \Delta_G - \lambda) : f (v) = 0~\text{for all}~v \in V \big\}
\end{align*}
for the resonance eigenspace of $G$ corresponding to $\lambda$; note that $R (G, \lambda)$ is trivial for all but at most countably many $\lambda$. We say that a cycle in $G$ has {\em odd length} with respect to a given real number $x > 0$ if its length is an odd multiple of $x$.

\begin{theorem}\label{thm:general}
Let $G$ be a finite metric graph. For each $\lambda > 0$ denote by $G_\lambda$ the subgraph of $G$ consisting of all edges whose length is a natural multiple of $\pi / \sqrt{\lambda}$. Then
\begin{align}\label{eq:resSpace}
 \dim R (G, \lambda) = \beta_1 (G_\lambda) - \beta_0^{\rm odd} (G_\lambda),
\end{align}
where $\beta_0^{\rm odd} (G_\lambda)$ is the number of connected components of $G_\lambda$ which contain a cycle with odd length with respect to $\pi / \sqrt{\lambda}$. In particular, $\lambda$ is a resonance of $G$ if and only if the right-hand side of~\eqref{eq:resSpace} is nonzero.
\end{theorem}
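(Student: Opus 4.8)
The plan is to reduce the computation of $\dim R(G,\lambda)$ to a purely linear-algebraic rank computation on the subgraph $G_\lambda$ and then to evaluate that rank combinatorially. First I would solve $-f_e'' = \lambda f_e$ on each edge $e = [0, L(e)]$ under the Dirichlet conditions $f_e(0) = f_e(L(e)) = 0$ forced by the resonance requirement $f(v) = 0$ at every vertex. Since $\lambda > 0$, the general solution vanishing at the left endpoint is $f_e(x) = c_e \sin(\sqrt{\lambda}\, x)$, and $f_e(L(e)) = c_e \sin(\sqrt{\lambda}\, L(e)) = 0$ forces $c_e = 0$ unless $\sqrt{\lambda}\, L(e) \in \pi \N$, i.e.\ unless $e$ belongs to $G_\lambda$. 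Hence every resonance eigenfunction is supported on $G_\lambda$, continuity at each vertex is automatic (all edge limits equal $0$), and $R(G,\lambda)$ is parametrized by the coefficient vector $(c_e)_{e \in E(G_\lambda)}$ subject only to the Kirchhoff conditions. Writing $L(e) = n_e \pi / \sqrt{\lambda}$ and using $f_e'(0) = c_e \sqrt{\lambda}$ and $f_e'(L(e)) = (-1)^{n_e} c_e \sqrt{\lambda}$, the Kirchhoff condition at a vertex $v$ becomes a homogeneous linear equation in the $c_e$; collecting these over all $v$ yields a matrix $K$ of size $|V(G_\lambda)| \times |E(G_\lambda)|$ with $\dim R(G,\lambda) = |E(G_\lambda)| - \rank K$.

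The second step is to read off the entries of $K$: after dividing out the common factor $\sqrt{\lambda}$, the column of an edge $e$ has the entry $-1$ in the row of $o(e)$ and the entry $(-1)^{n_e}$ in the row of $t(e)$ (a loop at $v$ thus producing the single entry $(-1)^{n_e} - 1$ in row $v$). This is exactly the incidence matrix of the \emph{signed graph} on $G_\lambda$ in which the edge $e$ carries the sign $(-1)^{n_e}$, so that even edges join their endpoints with opposite signs and odd edges with equal signs. It suffices to compute $\rank K$ on each connected component $C$ of $G_\lambda$ separately, and I would do so through the \emph{left} kernel of $K_C$: a vector $a \colon V(C) \to \C$ of row coefficients annihilates every column exactly when $a_{t(e)} = (-1)^{n_e} a_{o(e)}$ holds for each edge $e$ of $C$ (the loop case reading $a_v = (-1)^{n_e} a_v$). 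Fixing the value of $a$ at one vertex and propagating it along a spanning tree of $C$ determines $a$ uniquely, and this assignment is consistent with the remaining edges and loops if and only if $\prod_e (-1)^{n_e} = +1$ around every cycle of $C$.

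Here is the combinatorial heart of the argument: the sign product around a cycle equals $+1$ precisely when the cycle contains an even number of odd edges, which happens precisely when its total metric length $\sum_e n_e\, \pi/\sqrt{\lambda}$ is an \emph{even} multiple of $\pi/\sqrt{\lambda}$. Thus the left kernel of $K_C$ is one-dimensional when $C$ contains no cycle of odd length and trivial when $C$ contains a cycle of odd length, where loops count as cycles of length one. Consequently $\rank K_C = |V(C)| - 1$ in the first case and $\rank K_C = |V(C)|$ in the second, so that $\dim \ker K_C$ equals $\beta_1(C)$ if $C$ has no odd cycle and $\beta_1(C) - 1$ if it does. Summing over the connected components of $G_\lambda$ gives $\dim R(G,\lambda) = \sum_C \beta_1(C) - \beta_0^{\rm odd}(G_\lambda) = \beta_1(G_\lambda) - \beta_0^{\rm odd}(G_\lambda)$, as claimed; the final statement follows since this is a dimension, hence nonnegative, so $\lambda$ is a resonance if and only if $\beta_1(G_\lambda) > \beta_0^{\rm odd}(G_\lambda)$.

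The step I expect to be the main obstacle is the rank computation, i.e.\ making precise that the solvability of the vertex-propagation is governed exactly by the parity of cycle lengths; the left-kernel viewpoint is what makes this transparent, and one must take care that loops and the multigraph structure (parallel edges, several independent cycles through a vertex) are handled uniformly by the single relation $a_{t(e)} = (-1)^{n_e} a_{o(e)}$.
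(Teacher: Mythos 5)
Your proposal is correct, and it takes a genuinely different route from the paper. The paper argues constructively: it fixes a spanning tree of each connected component of $G_\lambda$, builds explicit resonance eigenfunctions (scars) supported on the fundamental cycles $C_1,\dots,C_\beta$ --- or, when a fundamental cycle is odd, on an Eulerian cycle through the union of two odd cycles and a connecting path --- and then shows these span $R(G,\lambda)$ by subtracting off the components on the removed edges and invoking unique continuation on trees. Your argument instead encodes the Kirchhoff conditions as the linear system $Kc=0$ for the signed incidence matrix of $G_\lambda$ with edge signs $(-1)^{n_e}$, and computes $\rank K$ componentwise via the left kernel: the propagation rule $a_{t(e)}=(-1)^{n_e}a_{o(e)}$ is consistent iff every cycle has positive sign product, i.e.\ even length in units of $\pi/\sqrt{\lambda}$, so the cokernel has dimension $1$ or $0$ per component according to balance; rank--nullity then gives $\beta_1(C)$ or $\beta_1(C)-1$. (This is essentially Zaslavsky's rank formula for signed graphs, which you reprove on the spot; your reduction that consistency on the fundamental cycles propagates to all cycles is correct because the $\{\pm1\}$-valued sign product is a homomorphism on the cycle space over $\mathbb{Z}_2$, and length mod $2$ is additive under symmetric difference.) The trade-off is clear: the paper's construction is longer but exhibits an explicit basis of scar states, which is geometrically informative and is reused implicitly in the discussion of visibility; your linear-algebraic route is shorter and more systematic for the pure dimension count, handles loops and multi-edges uniformly as you note, but produces no explicit eigenfunctions. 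All steps check out, including the key reductions that resonance eigenfunctions are supported on $G_\lambda$ and that Kirchhoff conditions at vertices meeting edges outside $G_\lambda$ receive no contribution from those edges.
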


We point out that the subgraph $G_\lambda$ defined in the theorem is empty for all but at most countably many $\lambda$. Moreover, we remark that if $G_\lambda$ is equilateral then $\beta_0^{\rm odd} (G_\lambda)$ is the number of connected components of $G_\lambda$ which are not bipartite. Finally, we point out that for equilateral metric graphs Theorem~\ref{thm:general} is in accordance with the results of~\cite[Section~3.1]{HW16}.

\begin{proof}[Proof of Theorem~\ref{thm:general}]
Let $\lambda \in \R$, let the subgraph $G_\lambda$ be defined as in the theorem, and let first $\cG_\lambda$ be an arbitrary connected component of $G_\lambda$. We denote by $\beta := \beta_1 (\cG_\lambda)$ its first Betti number and choose edges $e_1, \dots, e_\beta$ such that the graph $T$ obtained from $\cG_\lambda$ by removing $e_1, \dots, e_\beta$ is a connected tree. Then for each $j \in \{1, \dots, \beta\}$ the graph $T \cup \{e_j\}$ contains a cycle $C_j$, which is unique up to periodic shifts and reversion. 

Assume first that $\cG_\lambda$ contains no cycle with odd length w.r.t.~$\pi / \sqrt{\lambda}$. For each $j \in \{1, \dots, \beta\}$ consider a function $f^{(j)}$ on $G$ of the form
\begin{align*}
 f^{(j)}_e (x) = \alpha_e^{(j)} \sin (\sqrt{\lambda} x), \quad x \in [0, L (e)], \quad e \in E,
\end{align*}
with $\alpha_e^{(j)} = 0$ for all $e$ which do not belong to $C_j$. Since each edge of $C_j$ is a natural multiple of $\pi / \sqrt{\lambda}$, each such function $f^{(j)}$ is continuous on $G$ with $f^{(j)} (v) = 0$ for all $v \in V$, and since the total length of $C_j$ is an even multiple of $\pi / \sqrt{\lambda}$, we can choose $\alpha_e^{(j)} \in \{-1, 1\}$ for edges $e$ of $C_j$ in such a way that $f^{(j)}$ has a continuous derivative along $C_j$. Thus the $f^{(j)}$, $j = 1, \dots, \beta$, belong to $R (G, \lambda)$. Moreover, they are linearly independent since the restriction of $f^{(m)}$ to $e_j$ is identically zero for $m \neq j$ but nontrivial for $m = j$, $j = 1, \dots, \beta$. Furthermore, if $f \in R (G, \lambda)$ is arbitrary then $f_{e_j} = \gamma_j f^{(j)}_{e_j}$ holds for some $\gamma_j$, $j = 1, \dots, \beta$, and the function
\begin{align*}
 g := f - \sum_{j = 1}^\beta \gamma_j f^{(j)} 
\end{align*}
belongs to $R (G, \lambda)$ and vanishes identically on each of the edges $e_1, \dots, e_\beta$. Moreover, it is clear that $f$ (and, hence, $g$) must be zero identically on each edge of $G$ whose length is not a natural multiple of $\pi / \sqrt{\lambda}$. Therefore $g$ vanishes identically outside the tree $T$ and on all vertices of $G$, and thus it follows that $g = 0$ on $G$; cf.~Step~1 of the proof of~\cite[Theorem~3.5]{R15}. Hence in this case $\dim R (\cG_\lambda, \lambda) = \beta_1 (\cG_\lambda)$.

Let us now come to the case of a connected component $\cG_\lambda$ which contains a cycle with odd length w.r.t.~$\pi / \sqrt{\lambda}$. Then at least one of the cycles $C_j$ must have odd length w.r.t.~$\pi / \sqrt{\lambda}$, without loss of generality let this hold for $C_\beta$. Let us now go through the cycles $C_1, \dots, C_{\beta - 1}$. If $C_j$ has even length w.r.t.~$\pi / \sqrt{\lambda}$ then a nontrivial function $f^{(j)} \in R (G, \lambda)$ with support on $C_j$ can be found as in the previous case. If $C_j$ has odd length w.r.t.~$\pi / \sqrt{\lambda}$ then a nontrivial function $f^{(j)} \in R (G, \lambda)$ with support in $T \cup \{e_j, e_\beta\}$ can be constructed. Indeed, if $C_j$ and $C_\beta$ contain a joint edge then the symmetric difference of $C_j$ and $C_\beta$ is a cycle of even length and the construction of a resonance eigenfunction is as above; otherwise the core of $T \cup \{e_j, e_\beta\}$ consists of $C_j$, $C_\beta$ and, if these do not have a joint vertex, a path connecting the two, and the graph obtained from this by doubling the connecting path (if any) has an even multiple of $\pi / \sqrt{\lambda}$ as its length and contains an Eulerian cycle; thus ``spooling'' the function $x \mapsto \sin (\sqrt{\lambda} x)$ onto this Eulerian cycle and extending it by zero to all of $G$ leads to the desired resonance eigenfunction. Thus we have obtained linearly independent functions $f^{(1)}, \dots, f^{(\beta - 1)} \in R (G, \lambda)$. If $f \in R (G, \lambda)$ is arbitrary then as in the first case we can find coefficients $\gamma_1, \dots, \gamma_{\beta - 1}$ such that 
\begin{align*}
 g := f - \sum_{j = 1}^{\beta - 1} \gamma_j f^{(j)} 
\end{align*}
vanishes on each of the edges $e_1, \dots, e_{\beta - 1}$. Hence $g$ has support in $T \cup \{e_\beta\}$, and since $g$ vanishes at each vertex, it follows that actually the support of $g$ is contained in $C_\beta$, the only cycle of $T \cup \{e_\beta\}$. As $C_\beta$ has odd length w.r.t.~$\pi / \sqrt{\lambda}$, it follows $g = 0$ identically on $G$. Hence, in this case, $\dim R (\cG_\lambda, \lambda) = \beta_1 (\cG_\lambda) - 1$. 

Finally, denoting by $\cG_\lambda^1, \dots, \cG_\lambda^n$ the connected components of $G_\lambda$ containing no cycle with odd length w.r.t.~$\pi / \sqrt{\lambda}$ and by $\cG_\lambda^{n + 1}, \dots, \cG_\lambda^N$ the remaining ones, we obtain
\begin{align*}
 \dim R (G, \lambda) & = \sum_{k = 1}^N \dim R (\cG_\lambda^k, \lambda) = \sum_{k = 1}^n \beta_1 (\cG_\lambda^k) + \sum_{k = n + 1}^N \big( \beta_1 (\cG_\lambda^k) - 1 \big) \\
 & = \beta_1 (G_\lambda) - \beta_0^{\rm odd} (G_\lambda),
\end{align*}
where we have used that each resonance eigenfunction corresponding to $\lambda$ must have support in $G_\lambda$. We have derived the assertion of the theorem.
\end{proof}

We illustrate the application of Theorem~\ref{thm:general} by an example.

\begin{example}\label{ex:example2}
Let $G$ be the metric graph given in Figure~\ref{fig:Hantel1} with the dotted edge having length $\frac{\pi}{2}$, the dashed edges having length 1 and the remaining edges having length $\sqrt{3}$. 
\begin{figure}[h]
\begin{center}
\quad
\begin{tikzpicture}
\pgfsetlinewidth{1pt}
\color{black}
\fill (0,0) circle (2pt);  
\draw[dashed](0,0)--(0.866,1.5);
\draw[dashed](0,0)--(-0.866,1.5);
\draw[dashed](-0.866,1.5)--(0.866,1.5);
\fill (-0.866,1.5) circle (2pt);
\fill (0.866,1.5) circle (2pt);
\draw[dashed](0,0)--(0.866,-1.5);
\draw[dashed](0,0)--(-0.866,-1.5);
\draw[dashed](-0.866,-1.5)--(0.866,-1.5);
\fill (-0.866,-1.5) circle (2pt);
\fill (0.866,-1.5) circle (2pt);
\pgfxyline(0.866,1.5)(0.866,-1.5)
\pgfxyline(0.866,-1.5)(3.466,0)
\pgfxyline(0.866,1.5)(3.466,0)
\fill (3.466,0) circle (2pt);
\pgfxyline(-0.866,1.5)(-0.866,-1.5)
\pgfxyline(-0.866,-1.5)(-3.466,0)
\pgfxyline(-0.866,1.5)(-3.466,0)
\fill (-3.466,0) circle (2pt);
\draw[dotted](3.466,0)--(6.5,0);
\fill (6.5,0) circle (2pt);
\end{tikzpicture}
\qquad
\end{center}
\caption{A metric graph consisting of a dotted edge having length $\frac{\pi}{2}$, six dashed edges having length 1 and the remaining edges having length $\sqrt{3}$.}\label{fig:Hantel1}
\end{figure}
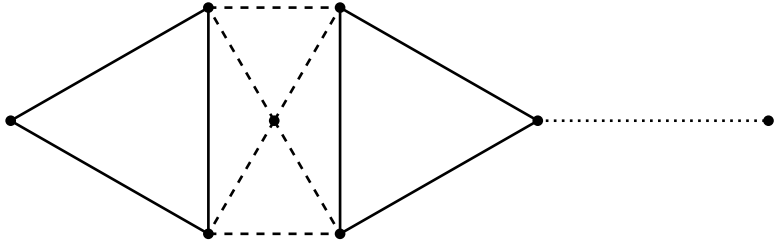
In the table below the first four values of $\lambda$ are displayed for which the subgraph $G_\lambda$ in Theorem~\ref{thm:general} is nontrivial, and the dimension of $R (G, \lambda)$ is calculated for these $\lambda$ by means of Theorem~\ref{thm:general}. It turns out that two out of these four $\lambda$ are indeed resonances, i.e., $\dim R (G, \lambda) > 0$.
\vspace*{2mm}
\begin{center}
\begin{tabular}{|c|c|c|c|c|c|c|}
\hline
$\lambda$ \phantom{$\hat{\hat I}$ \hspace{-4mm}} & $\frac{\pi}{\sqrt{\lambda}}$ & $G_\lambda$ & $\beta_1(G_\lambda)$ & $\beta_0^\text{odd}(G_\lambda)$ & $\dim R(G,\lambda)$ & resonance? \\ \hline
&&&&&&\\[-0.3cm]
$\frac{1}{3}\pi^2$	&$\sqrt{3}$	              &\resizebox{2cm}{0.5cm}{%
\begin{tikzpicture}
\pgfsetlinewidth{3pt}
\color{lightgray}
\fill (0,0) circle (2pt);  
\pgfxyline(0,0)(0.866,1.5)
\pgfxyline(0,0)(-0.866,1.5)
\pgfxyline(-0.866,1.5)(0.866,1.5)
\fill (-0.866,1.5) circle (2pt);
\fill (0.866,1.5) circle (2pt);
\pgfxyline(0,0)(0.866,-1.5)
\pgfxyline(0,0)(-0.866,-1.5)
\pgfxyline(-0.866,-1.5)(0.866,-1.5)
\fill (-0.866,-1.5) circle (2pt);
\fill (0.866,-1.5) circle (2pt);
\color{black}
\pgfxyline(0.866,1.5)(0.866,-1.5)
\pgfxyline(0.866,-1.5)(3.466,0)
\pgfxyline(0.866,1.5)(3.466,0)
\fill (3.466,0) circle (2pt);
\pgfxyline(-0.866,1.5)(-0.866,-1.5)
\pgfxyline(-0.866,-1.5)(-3.466,0)
\pgfxyline(-0.866,1.5)(-3.466,0)
\fill (-3.466,0) circle (2pt);
\color{lightgray}
\pgfxyline(3.466,0)(6.5,0)
\fill (6.5,0) circle (2pt);
\end{tikzpicture}
}
&2 &2 &0 & \ding{55} \\[0.2cm]
$4$	&$\frac{\pi}{2}$	&
\resizebox{2cm}{0.5cm}{%
\begin{tikzpicture}
\pgfsetlinewidth{3pt}
\color{lightgray}
\fill (0,0) circle (2pt);  
\pgfxyline(0,0)(0.866,1.5)
\pgfxyline(0,0)(-0.866,1.5)
\pgfxyline(-0.866,1.5)(0.866,1.5)
\fill (-0.866,1.5) circle (2pt);
\fill (0.866,1.5) circle (2pt);
\pgfxyline(0,0)(0.866,-1.5)
\pgfxyline(0,0)(-0.866,-1.5)
\pgfxyline(-0.866,-1.5)(0.866,-1.5)
\fill (-0.866,-1.5) circle (2pt);
\fill (0.866,-1.5) circle (2pt);
\pgfxyline(0.866,1.5)(0.866,-1.5)
\pgfxyline(0.866,-1.5)(3.466,0)
\pgfxyline(0.866,1.5)(3.466,0)
\fill (3.466,0) circle (2pt);
\pgfxyline(-0.866,1.5)(-0.866,-1.5)
\pgfxyline(-0.866,-1.5)(-3.466,0)
\pgfxyline(-0.866,1.5)(-3.466,0)
\fill (-3.466,0) circle (2pt);
\color{black}
\pgfxyline(3.466,0)(6.5,0)
\fill (6.5,0) circle (2pt);
\end{tikzpicture}
}
&0 &0 &0 & \ding{55} \\[0.2cm]
$\pi^2$	&$1$  &
\resizebox{2cm}{0.5cm}{%
\begin{tikzpicture}
\pgfsetlinewidth{3pt}
\color{black}
\fill (0,0) circle (2pt);  
\pgfxyline(0,0)(0.866,1.5)
\pgfxyline(0,0)(-0.866,1.5)
\pgfxyline(-0.866,1.5)(0.866,1.5)
\fill (-0.866,1.5) circle (2pt);
\fill (0.866,1.5) circle (2pt);
\pgfxyline(0,0)(0.866,-1.5)
\pgfxyline(0,0)(-0.866,-1.5)
\pgfxyline(-0.866,-1.5)(0.866,-1.5)
\fill (-0.866,-1.5) circle (2pt);
\fill (0.866,-1.5) circle (2pt);
\color{lightgray}
\pgfxyline(0.866,1.5)(0.866,-1.5)
\pgfxyline(0.866,-1.5)(3.466,0)
\pgfxyline(0.866,1.5)(3.466,0)
\fill (3.466,0) circle (2pt);
\pgfxyline(-0.866,1.5)(-0.866,-1.5)
\pgfxyline(-0.866,-1.5)(-3.466,0)
\pgfxyline(-0.866,1.5)(-3.466,0)
\fill (-3.466,0) circle (2pt);
\pgfxyline(3.466,0)(6.5,0)
\fill (6.5,0) circle (2pt);
\end{tikzpicture}
}
&2 &1 &1 & \ding{51} \\[0.2cm]
$\frac{4}{3}\pi^2$	&$\frac{\sqrt{3}}{2}$	          &
\resizebox{2cm}{0.5cm}{%
\begin{tikzpicture}
\pgfsetlinewidth{3pt}
\color{lightgray}
\fill (0,0) circle (2pt);  
\pgfxyline(0,0)(0.866,1.5)
\pgfxyline(0,0)(-0.866,1.5)
\pgfxyline(-0.866,1.5)(0.866,1.5)
\fill (-0.866,1.5) circle (2pt);
\fill (0.866,1.5) circle (2pt);
\pgfxyline(0,0)(0.866,-1.5)
\pgfxyline(0,0)(-0.866,-1.5)
\pgfxyline(-0.866,-1.5)(0.866,-1.5)
\fill (-0.866,-1.5) circle (2pt);
\fill (0.866,-1.5) circle (2pt);
\color{black}
\pgfxyline(0.866,1.5)(0.866,-1.5)
\pgfxyline(0.866,-1.5)(3.466,0)
\pgfxyline(0.866,1.5)(3.466,0)
\fill (3.466,0) circle (2pt);
\pgfxyline(-0.866,1.5)(-0.866,-1.5)
\pgfxyline(-0.866,-1.5)(-3.466,0)
\pgfxyline(-0.866,1.5)(-3.466,0)
\fill (-3.466,0) circle (2pt);
\color{lightgray}
\pgfxyline(3.466,0)(6.5,0)
\fill (6.5,0) circle (2pt);
\end{tikzpicture}
}
&2 &0 &2 & \ding{51} \\[0.2cm]
\hline
\end{tabular}
\end{center}
\end{example}
\bigskip \bigskip

An immediate corollary of Theorem~\ref{thm:general} looks as follows. Here for a given cycle $C$ in $G$ with commensurate edges (i.e., the edge lengths are rational multiples of each other) we denote by $u_C$ the maximal number such that the length of each edge in $C$ is a natural multiple of $u_C$.

\begin{corollary}\label{cor:soGehtsAuch}
Let $G$ be a finite metric graph and let
\begin{align*}
 \lambda_G := \inf \left\{ \frac{\pi^2}{u_C^2} : C~\text{cycle in}~G~\text{with commensurate edges} \right\}.
\end{align*}
Then $G$ has no resonance in $(- \infty, \lambda_G)$.
\end{corollary}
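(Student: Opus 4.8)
The plan is to deduce Corollary~\ref{cor:soGehtsAuch} directly from Theorem~\ref{thm:general}, which already characterizes resonances completely. The key observation is that a resonance at $\lambda > 0$ forces the subgraph $G_\lambda$ to contain at least one cycle. Indeed, by Theorem~\ref{thm:general} the point $\lambda$ is a resonance precisely when $\beta_1(G_\lambda) - \beta_0^{\rm odd}(G_\lambda) > 0$, and since $\beta_0^{\rm odd}(G_\lambda) \geq 0$ this requires $\beta_1(G_\lambda) > 0$, i.e.\ $G_\lambda$ has a nontrivial cycle $C$. So the first step is to record this implication: \emph{every resonance $\lambda$ gives rise to a cycle $C$ lying entirely inside $G_\lambda$.}

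Next I would unpack what membership in $G_\lambda$ means for the edges of that cycle. By definition of $G_\lambda$, every edge $e$ of $C$ has length $L(e)$ equal to a natural multiple of $\pi/\sqrt{\lambda}$. In particular all edges of $C$ are commensurate (each is an integer multiple of the common quantity $\pi/\sqrt{\lambda}$), so $C$ is a cycle with commensurate edges in the sense of the corollary, and $u_C$, the maximal number such that each edge length in $C$ is a natural multiple of $u_C$, is well defined. Since every edge length of $C$ is a natural multiple of $\pi/\sqrt{\lambda}$, the number $\pi/\sqrt{\lambda}$ is itself a common divisor of the edge lengths, and by maximality of $u_C$ we obtain the inequality $\pi/\sqrt{\lambda} \leq u_C$, equivalently $\lambda \geq \pi^2/u_C^2 \geq \lambda_G$.

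The final step is to assemble these bounds into the contrapositive statement of the corollary. If $\lambda > 0$ is a resonance then, by the two steps above, there exists a cycle $C$ in $G$ with commensurate edges satisfying $\lambda \geq \pi^2/u_C^2$, and hence $\lambda \geq \inf\{\pi^2/u_C^2\} = \lambda_G$. Therefore no $\lambda \in (0, \lambda_G)$ can be a resonance. Combining this with the remark already made in the paper that every resonance is positive (so there are certainly no resonances in $(-\infty, 0]$), we conclude that $G$ has no resonance in all of $(-\infty, \lambda_G)$, which is exactly the claim.

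I do not expect a genuine obstacle here, since the corollary is essentially a reformulation of the resonance criterion of Theorem~\ref{thm:general} in terms of the quantity $u_C$. The one point requiring a little care is the comparison $\pi/\sqrt{\lambda} \leq u_C$: one must argue that $u_C$ is at least as large as any common natural divisor of the edge lengths of $C$, which is immediate from the definition of $u_C$ as the \emph{maximal} such number. A minor bookkeeping issue is that the infimum defining $\lambda_G$ is taken over all commensurate cycles and could in principle fail to be attained, but this does not affect the argument, as we only need the inequality $\lambda \geq \lambda_G$ for the particular cycle $C$ produced by the resonance.
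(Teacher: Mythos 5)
Your proposal is correct and follows essentially the same route as the paper: the paper argues the contrapositive (for $\lambda < \lambda_G$ no cycle of $G_\lambda$ can exist since $\pi/\sqrt{\lambda} > u_C$ for every commensurate cycle, so $G_\lambda$ is a forest and Theorem~\ref{thm:general} gives $\dim R(G,\lambda) = 0$), while you run the same comparison $\pi/\sqrt{\lambda} \leq u_C$ in the forward direction starting from a resonance. The key observations --- that a resonance forces $\beta_1(G_\lambda) > 0$, that any cycle in $G_\lambda$ is automatically commensurate, and that the maximality of $u_C$ yields the inequality --- are identical in both arguments.
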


\begin{proof}
Let $\lambda < \lambda_G$. Then $\pi / \sqrt{\lambda} > u_C$ holds for each cycle $C$ in $G$ with commensurate edges; in particular, not every edge length in $C$ is a natural multiple of $\pi / \sqrt{\lambda}$. Thus no cycle with commensurate edges is contained in $G_\lambda$. Since $G_\lambda$ also does not contain any non-commensurate cycles, it follows that $G_\lambda$ is a tree and Theorem~\ref{thm:general} yields $\dim R (G, \lambda) = 0$, that is, $\lambda$ is not a resonance of $G$.
\end{proof}

The expression $\lambda_G$ in Corollary~\ref{cor:soGehtsAuch} is comparatively simple and we point out that in some cases $\lambda_G$ is optimal in the sense that $\lambda_G$ is indeed a (the smallest) real resonance of $G$, while for other graphs this is not the case. However, Theorem~\ref{thm:general} can always be used to find the actual lowest real resonance.

\section{Visibility of quantum graph spectrum from the vertices}\label{sec:M}

In this section we apply the result of Section~\ref{sec:resonances} to the question of visibility of quantum graph spectrum from vertex data which is given via the Titchmarsh--Weyl function.

We start by recalling the definition of the Titchmarsh--Weyl function. For its well-definedness see, e.g.,~\cite[Lemma~2.2]{R15}.

\begin{definition}\label{def:TW}
Let $B = \{v_1, \dots, v_m\}$ be a nonempty subset of the set of vertices of~$G$. Moreover, let $\mu \in \C \setminus \sigma (- \Delta_G)$. For $l = 1, \dots, m$ let $f^{(l)} \in \widetilde H^2 (G) \cap H^1 (G)$ with $- f_e^{(l) \prime \prime} = \mu f_e^{(l)}$, $e \in E$, such that $\partial_\nu f^{(l)} (v_l) = 1$ and
\begin{align*}
  \partial_\nu f^{(l)} (v) = 0, \quad v \in V \setminus \{v_l\}.
\end{align*}
The {\em Titchmarsh--Weyl matrix} $M_B (\mu) \in \C^{m \times m}$ is the matrix with the entries
\begin{align*}
 (M_B (\mu))_{k, l} = f^{(l)} (v_k), \quad k, l = 1, \dots, m.
\end{align*}
The matrix function $\mu \mapsto M_B (\mu)$ is called {\em Titchmarsh--Weyl function}.
\end{definition}

Obviously the Titchmarsh--Weyl function depends on the chosen vertex set $B$. Below we will fix $B$ sufficiently large for our purposes. Note that the Titchmarsh--Weyl function satisfies
\begin{align*}
 M_B (\mu) \begin{pmatrix} \partial_\nu f (v_1) \\ \vdots \\ \partial_\nu f (v_m) \end{pmatrix} = \begin{pmatrix} f (v_1) \\ \vdots \\ f (v_m) \end{pmatrix}, \quad \mu \in \C \setminus \sigma (- \Delta_G),
\end{align*}
for any solution $f \in \widetilde H^2 (G) \cap H^1 (G)$ of $- f_e'' = \mu f_e$, $e \in E$, such that $\partial_\nu f (v) = 0$ for all $v \in V \setminus B$. For fixed $\mu \in \C \setminus \sigma (- \Delta_G)$ the matrix $M_B (\mu) \in \C^{m \times m}$ can be regarded as a Neumann-to-Dirichlet map for the graph Laplacian. 

In the following for an analytic matrix function $R$ defined on an open set $\Omega \subset \C$ we say that $\lambda \in \C$ is a pole of order $n$ of $R$ if there exists an open neighborhood $\cO$ of $\lambda$ in $\C$ such that $\cO \setminus \{\lambda\} \subset \Omega$,
\begin{align*}
 \lim_{\mu \to \lambda} (\mu - \lambda)^n R (\mu)~\text{exists and is nontrivial}, \quad \text{and} \quad \lim_{\mu \to \lambda} (\mu - \lambda)^{n + 1} R (\mu) = 0.
\end{align*}
We say that $R$ is meromorphic if $\C \setminus \Omega$ consists of isolated points which are poles or removable singularities. For any $\lambda \in \C$ we denote by $\Res_\lambda R$ the residue of $R$ at $\lambda$, i.e., the matrix given by
\begin{align*}
 \Res_\lambda R = \frac{1}{2 \pi i} \int_\Gamma R (\mu) d \mu,
\end{align*}
where $\Gamma$ is any closed Jordan curve in $\Omega$ which surrounds $\lambda$ but no other point in $\C \setminus \Omega$. In particular, if $\lambda \in \Omega$ or if $\lambda$ is a removable singularity of $R$ then $\Res_\lambda R = 0$. Moreover, $\lambda \in \C \setminus \Omega$ is a pole of $R$ if and only if $\rank \Res_\lambda R > 0$.

For the following proposition see~\cite[Proposition~2.4]{R15}. Its proof is based on an expression for the Titchmarsh--Weyl function in terms of the resolvent of the Kirchhoff Laplacian.

\begin{proposition}\label{prop:M}
Let $B = \{v_1, \dots, v_m\}$ be any nonempty set of vertices of $G$. Then the matrix-valued Titchmarsh--Weyl function $\mu \mapsto M_B (\mu)$ in Definition~\ref{def:TW} is meromorphic on $\C$ with possible poles of order one in the discrete set $\sigma (- \Delta_G)$. Moreover, for each $\lambda \in \R$ the identity 
\begin{align*}
 \dim \ker (- \Delta_G - \lambda) = \rank \Res_\lambda M_B + \dim R (G, \lambda)
\end{align*}
holds, where
\begin{align*}
 R (G, \lambda) := \big\{ f \in \ker ( - \Delta_G - \lambda) : f (v) = 0~\text{for all}~v \in B \big\}.
\end{align*}
\end{proposition}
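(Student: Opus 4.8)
The plan is to realize $M_B$ explicitly through the resolvent of $-\Delta_G$ and then read off both the analytic structure and the residue ranks from its spectral decomposition. First I would identify each defining function $f^{(l)}$ as the resolvent applied to a point-evaluation functional. Integrating by parts edgewise and using the continuity of test functions at the vertices gives, for every $\phi \in H^1(G)$,
\begin{align*}
 \int_G (f^{(l)})' \overline{\phi'} - \mu \int_G f^{(l)} \overline{\phi} = \overline{\phi(v_l)},
\end{align*}
since $-f^{(l)\prime\prime}_e = \mu f^{(l)}_e$ on each edge and the only surviving boundary term is $\partial_\nu f^{(l)}(v_l) = 1$. As $H^1(G)$ embeds continuously into $C(G)$, the evaluation $\delta_{v} : \phi \mapsto \phi(v)$ is a bounded functional, i.e.\ $\delta_v \in H^1(G)^*$, and the displayed identity states precisely that $f^{(l)}$ is the weak solution of $(-\Delta_G - \mu) f^{(l)} = \delta_{v_l}$. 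For $\mu \in \C \setminus \sigma(-\Delta_G)$ this solution is unique and given by the (extended) resolvent, $f^{(l)} = (-\Delta_G - \mu)^{-1}\delta_{v_l} \in H^1(G)$; since $\delta_{v_l}$ is supported at a single vertex, $f^{(l)}$ solves the homogeneous equation on each edge interior and hence lies in $\widetilde H^2(G)$, as required. Consequently
\begin{align*}
 (M_B(\mu))_{k,l} = f^{(l)}(v_k) = \big\langle (-\Delta_G - \mu)^{-1}\delta_{v_l}, \delta_{v_k}\big\rangle .
\end{align*}

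Next I would insert the spectral decomposition of the resolvent. Let $\{\psi_j\}_j$ be an orthonormal basis of $L^2(G)$ consisting of real eigenfunctions of $-\Delta_G$ with eigenvalues $\lambda_j$. Extending the resolvent to $H^1(G)^*$ and pairing against $\delta_{v_k}$ yields
\begin{align*}
 (M_B(\mu))_{k,l} = \sum_j \frac{\psi_j(v_k)\,\psi_j(v_l)}{\lambda_j - \mu},
\end{align*}
the series converging because $\delta_{v_k},\delta_{v_l} \in H^1(G)^*$. From this representation all analytic assertions are immediate: each entry is meromorphic on $\C$ with possible poles only at points of the discrete set $\sigma(-\Delta_G)$, and each such pole is of order one, as only the scalar factors $(\lambda_j - \mu)^{-1}$ are singular (an eigenvalue of multiplicity $>1$ still contributes a simple pole, with a higher-rank residue).

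Finally I would compute the residue and convert a rank into a codimension. Collecting the finitely many $j$ with $\lambda_j = \lambda$ and forming the $m \times d$ matrix $W = (\psi_j(v_k))_{k,j}$, where $d = \dim \ker(-\Delta_G - \lambda)$ and the $\psi_j$ run through an orthonormal basis of this eigenspace, one gets $\Res_\lambda M_B = -\,W W^\top$, so $\rank \Res_\lambda M_B = \rank W$. The matrix $W$ represents the vertex-evaluation map $\Phi : \ker(-\Delta_G - \lambda) \to \C^m$, $f \mapsto (f(v_1),\dots,f(v_m))$, whose kernel is exactly $R(G,\lambda)$. Rank--nullity then gives $\rank \Res_\lambda M_B = d - \dim R(G,\lambda)$, which is the claimed identity, trivially reducing to $0 = 0$ when $\lambda \notin \sigma(-\Delta_G)$. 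I expect the main technical obstacle to be the rigorous handling of the singular source $\delta_{v_l}$: one must justify the extension of $(-\Delta_G - \mu)^{-1}$ to the dual $H^1(G)^*$, the convergence of the spectral series, and the interchange of summation with the pairing. All of this rests on the Gelfand triple $H^1(G) \subset L^2(G) \subset H^1(G)^*$ together with the continuity of point evaluations, but it is the step that demands genuine care rather than formal manipulation.
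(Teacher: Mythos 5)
Your argument is correct, and it follows essentially the route the paper indicates: the paper does not prove Proposition~\ref{prop:M} itself but cites \cite[Proposition~2.4]{R15}, whose proof likewise rests on expressing $M_B$ through the resolvent of the Kirchhoff Laplacian, with the residue identity then coming from a rank--nullity argument for the vertex-evaluation map on the eigenspace, exactly as in your last step. The points you flag as delicate (pairing the resolvent with $\delta_{v}\in H^1(G)^*$ and extracting the residue term by term from the eigenfunction expansion) are precisely the technical content deferred to that reference and are handled there in the same spirit.
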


It follows immediately from Proposition~\ref{prop:M} that each pole of $M_B$ is an eigenvalue of $- \Delta_G$. However, eigenvalues may occur which do not materialize as poles of $M_B$. 

\begin{remark}
There is an alternative way of describing the eigenvalues of the Kirchhoff Laplacian in terms of the scattering matrix $S (k)$ for the non-compact graph obtained from $G$ by attaching infinite leads to the vertices in $B$. In fact, the points $\lambda = k^2$ such that $\det (I - S (k)) = 0$ correspond exactly to those eigenvalues having non-resonant eigenfunctions, whereas the resonances are not directly visible from the scattering matrix. This observation can be transferred to the poles of the Neumann-to-Dirichlet map by expressing $M_B (\lambda)$ in terms of the scattering matrix (via a Cayley transform). A description of the complete spectrum of the Laplacian with Kirchhoff (and further) vertex conditions including the resonances in terms of scattering data was given in~\cite[Section~2]{BBS12}, see also~\cite[Theorem~5.4.6]{BK13}.
\end{remark}

For a choice of the vertex set $B$ which is suitable to apply Theorem~\ref{thm:general}, let us recall the following definition; cf.~Figure~\ref{fig:core}. Recall that the core of a graph is the largest subgraph which does not possess vertices of degree one.

\begin{definition}\label{def:core}
Let~$G$ be a finite metric graph. 
\begin{enumerate}
 \item A {\em boundary vertex} is a vertex of degree one.
 \item We call a vertex~$v$ of~$G$ {\em proper core vertex} if all edges attached to~$v$ belong to the core of~$G$.
\end{enumerate}
\end{definition}

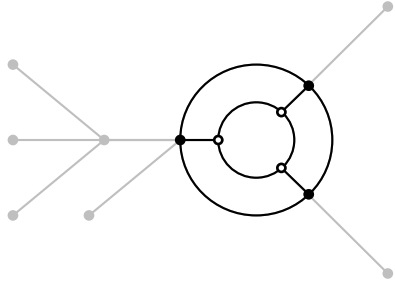
\begin{figure}[h]
\begin{center}
\begin{tikzpicture}
\pgfsetlinewidth{0.8pt}
\color{lightgray}
\draw (4.5,0) circle (5mm);
\draw (4.5,0) circle (10mm);
\pgfxyline(1.3,0)(4,0)
\pgfxyline(4.83,0.37)(6.23,1.77)
\pgfxyline(4.83,-0.37)(6.23,-1.77)
\fill (4,0) circle (2pt);
\fill (4.83,0.37) circle (2pt);
\fill (4.83,-0.37) circle (2pt);
\fill (1.3,0) circle (2pt);
\fill (6.23,1.77) circle (2pt);
\fill (6.23,-1.77) circle (2pt);
\fill (5.19,0.72) circle (2pt);
\fill (5.19,-0.72) circle (2pt);
\fill (3.5,0) circle (2pt);
\pgfxyline(2.5,0)(1.3,1);
\pgfxyline(2.5,0)(1.3,-1);
\pgfxyline(3.5,0)(2.3,-1);
\fill (2.5,0) circle (2pt);
\fill (1.3,1) circle (2pt);
\fill (1.3,-1) circle (2pt);
\fill (2.3,-1) circle (2pt);
\pgfxyline(2.5,0)(4,0)
\color{black}
\draw (4.5,0) circle (5mm);
\draw (4.5,0) circle (10mm);
\pgfxyline(3.5,0)(4,0)
\pgfxyline(4.83,0.37)(5.19,0.72)
\pgfxyline(4.83,-0.37)(5.19,-0.72)
\fill (4,0) circle (2pt);
\fill (4.83,0.37) circle (2pt);
\fill (4.83,-0.37) circle (2pt);
\fill (5.19,0.72) circle (2pt);
\fill (5.19,-0.72) circle (2pt);
\fill (3.5,0) circle (2pt);
\color{white}
\fill (4,0) circle (1pt);
\fill (4.83,0.37) circle (1pt);
\fill (4.83,-0.37) circle (1pt);
\pgfputat{\pgfxy(3.4,0.73)}{\pgfbox[center,base]{$C$}}
\end{tikzpicture}
\end{center}
\caption{A graph with its core marked in black and the proper core vertices drawn with empty dots.}
\label{fig:core}
\end{figure}

We are going to consider the Titchmarsh--Weyl function for any vertex set $B$ which contains all boundary vertices and all proper core vertices of $G$. As a preparation we show the following simple lemma. 

\begin{lemma}\label{lem:billig}
Let $B$ be a set of vertices which contains all boundary vertices and all proper core vertices of $G$. Let $\lambda \in \R$ and let $f \in \ker (- \Delta_G - \lambda)$ such that $f (v) = 0$ for all $v \in B$. Then $f (v) = 0$ for all $v \in V$.
\end{lemma}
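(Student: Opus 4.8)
The goal is to show that an eigenfunction $f$ vanishing on all boundary and proper core vertices of $G$ must in fact vanish on \emph{every} vertex of $G$. The natural strategy is to attack the vertices of $G$ which are neither boundary nor proper core vertices, and to argue that they, too, are forced to be zeros of $f$. By Definition~\ref{def:core}, a vertex $v \notin B$ is neither of degree one nor a proper core vertex, so $v$ has degree at least two and at least one edge incident to $v$ does \emph{not} lie in the core of $G$. I plan to exploit this non-core edge.

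The key structural observation is that any edge not belonging to the core must lie on a ``pendant'' part of the graph: the core is obtained by repeatedly deleting degree-one vertices together with their incident edges, so every non-core edge sits on a path that eventually terminates at a boundary vertex. More precisely, if $e$ is a non-core edge incident to $v$, then following $e$ away from $v$ and continuing along the tree-like appendage, one reaches a boundary vertex of $G$ through a sequence of vertices of degree one in the successively pruned graphs. First I would isolate such a maximal pendant tree $T$ hanging off $v$ via the edge $e$; all its leaves (other than possibly $v$) are boundary vertices of $G$ and hence lie in $B$, so $f$ vanishes there.

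The plan is then to propagate the zero of $f$ from the leaves of $T$ inward toward $v$ using the eigenvalue equation together with the Kirchhoff conditions. Start at a boundary vertex $w$ of $T$: since $w \in B$ we have $f(w) = 0$, and since $w$ has degree one its single incident edge $\hat e$ carries a solution $f_{\hat e}$ of $-f'' = \lambda f$ satisfying the Kirchhoff condition $f'_{\hat e}(w) = 0$ (as $w \in V$ and $v \notin B$ imposes no special condition—rather, the condition $\partial_\nu f(w)=0$ holds because $w$ is a genuine vertex of $G$ on which $f \in \dom(-\Delta_G)$ obeys the matching condition). Thus $f_{\hat e}$ has both a zero and a vanishing derivative at $w$, forcing $f_{\hat e} \equiv 0$ on $\hat e$, and in particular $f$ vanishes at the other endpoint of $\hat e$ with vanishing one-sided derivative there. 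Proceeding inductively through the pendant tree $T$—at each intermediate degree-two-or-higher vertex the Kirchhoff condition, combined with the already-established vanishing of $f$ and its derivatives on the edges closer to the leaves, kills the derivative on the next edge—I would conclude $f \equiv 0$ on all of $T$, whence $f(v) = 0$.

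The main obstacle is handling the intermediate vertices inside the pendant tree, where the Kirchhoff condition couples several edges: one must verify that the zero and its derivative genuinely pass through a branching vertex rather than stalling. The cleanest way to dispatch this is induction on the pruning order used to define the core, i.e., on the number of deletion steps needed to remove the edge from $G$; at each step the relevant vertex is a \emph{leaf} of the pruned graph, so exactly one edge remains ``undetermined,'' and the Kirchhoff balance $\partial_\nu f = 0$ together with $f = 0$ at that vertex (already known from the previous inductive stage or from $w \in B$) yields a vanishing Cauchy datum for the remaining edge, forcing it to vanish identically by uniqueness for the ODE $-f'' = \lambda f$. This reduces the whole lemma to the one-edge fact that a solution with zero value and zero derivative at one endpoint is identically zero, applied repeatedly.
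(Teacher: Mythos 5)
Your proof is correct and follows essentially the same route as the paper: the paper decomposes $G$ into its core plus pendant rooted trees and cites Step~1 of the proof of \cite[Theorem~3.5]{R15} for exactly the leaf-to-root propagation of zero Cauchy data that you carry out explicitly by induction on the pruning order. Nothing further is needed.
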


\begin{proof}
The graph $G$ consists of its core and a finite number of rooted trees with their roots belonging to the core of $G$, and if $T$ is any of these trees, by assumption, $f (v) = 0$ for all but at most one boundary vertex $v$ of $T$. Thus a reasoning as in Step~1 of the proof of~\cite[Theorem~3.5]{R15} yields $f = 0$ identically on $T$, in particular, $f (v) = 0$ for each vertex $v$ of $T$. Since $f (v) = 0$ for each proper core vertex $v$ of $G$, it follows $f (v) = 0$ for all $v \in V$.
\end{proof}

We are now able to formulate the main result of this section, which is a direct consequence of Theorem~\ref{thm:general} and Proposition~\ref{prop:M}.

\begin{theorem}\label{thm:visibility}
Let $G$ be a finite metric graph and let $B \subset V$ be a set of vertices which contains all boundary vertices and all proper core vertices of $G$. For each $\lambda > 0$ let $G_\lambda$ be the subgraph of $G$ consisting of all edges whose lengths are natural multiples of $\pi / \sqrt{\lambda}$ and let $\beta_0^{\rm odd} (G_\lambda)$ be the number of connected components of $G_\lambda$ which contain a cycle with odd length with respect to $\pi / \sqrt{\lambda}$. Then the following assertions hold.
\begin{enumerate}
 \item If $\beta_1 (G_\lambda) = \beta_0^{\rm odd} (G_\lambda)$ then $\lambda$ is an eigenvalue of $- \Delta_G$ if and only if $\lambda$ is a pole of~$M_B$. Moreover,
\begin{align*}\
 \dim \ker (- \Delta_G - \lambda) = \rank \Res_\lambda M_B
\end{align*}
holds.
 \item If $\beta_1 (G_\lambda) > \beta_0^{\rm odd} (G_\lambda)$ then $\lambda$ is an eigenvalue of $- \Delta_G$ and 
\begin{align*}
 \dim \ker (- \Delta_G - \lambda) = \rank \Res_\lambda M_B + \beta_1 (G_\lambda) - \beta_0^{\rm odd} (G_\lambda).
\end{align*}
In particular, $\lambda$ is a pole of $M_B$ if and only if
\begin{align*}
 \dim \ker (- \Delta_G - \lambda) > \beta_1 (G_\lambda) - \beta_0^{\rm odd} (G_\lambda)
\end{align*}
holds.
\end{enumerate}
\end{theorem}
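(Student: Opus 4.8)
The plan is to combine Proposition~\ref{prop:M} with Theorem~\ref{thm:general}, using Lemma~\ref{lem:billig} as the bridge between the two a priori different notions of resonance eigenspace that occur in those statements. First I would observe that the space $R(G,\lambda)$ appearing in Proposition~\ref{prop:M}, namely $\{f \in \ker(-\Delta_G - \lambda) : f(v) = 0 \text{ for all } v \in B\}$, coincides under the standing hypothesis on $B$ with the resonance eigenspace $\{f \in \ker(-\Delta_G - \lambda) : f(v) = 0 \text{ for all } v \in V\}$ of Theorem~\ref{thm:general}. Indeed, the inclusion $\supseteq$ is trivial since $B \subset V$, while the inclusion $\subseteq$ is exactly the content of Lemma~\ref{lem:billig}: any $f$ vanishing on all of $B$ vanishes on all of $V$ as soon as $B$ contains the boundary vertices and proper core vertices. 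It is precisely here that the assumption on $B$ enters, and this identification is the crux of the whole argument.

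With the two spaces identified, Theorem~\ref{thm:general} evaluates the dimension of this common space as $\beta_1(G_\lambda) - \beta_0^{\rm odd}(G_\lambda)$. Substituting this into the identity of Proposition~\ref{prop:M} yields the master relation
\[
 \dim \ker(-\Delta_G - \lambda) = \rank \Res_\lambda M_B + \beta_1(G_\lambda) - \beta_0^{\rm odd}(G_\lambda),
\]
valid for every $\lambda > 0$. Both assertions of the theorem are then read off from this single equation together with the fact recorded before Proposition~\ref{prop:M} that $\lambda$ is a pole of the meromorphic function $M_B$ if and only if $\rank \Res_\lambda M_B > 0$.

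For assertion (i), where $\beta_1(G_\lambda) = \beta_0^{\rm odd}(G_\lambda)$, the master relation reduces to $\dim \ker(-\Delta_G - \lambda) = \rank \Res_\lambda M_B$. Hence $\lambda$ is an eigenvalue (i.e.\ $\dim \ker > 0$) exactly when $\rank \Res_\lambda M_B > 0$, that is, exactly when $\lambda$ is a pole, and the displayed dimension formula is immediate. For assertion (ii), where $\beta_1(G_\lambda) > \beta_0^{\rm odd}(G_\lambda)$, Theorem~\ref{thm:general} shows the resonance eigenspace is nontrivial, so $\ker(-\Delta_G - \lambda) \neq \{0\}$ and $\lambda$ is automatically an eigenvalue; the master relation is then the stated dimension formula, and rearranging it shows that $\rank \Res_\lambda M_B > 0$, i.e.\ that $\lambda$ is a pole, holds if and only if $\dim \ker(-\Delta_G - \lambda) > \beta_1(G_\lambda) - \beta_0^{\rm odd}(G_\lambda)$.

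The argument is essentially a synthesis of the two earlier results, so there is no computational obstacle. The one step that genuinely requires care is the identification of the two resonance eigenspaces via Lemma~\ref{lem:billig}: without the hypothesis that $B$ exhausts the boundary vertices and proper core vertices, the space relevant to $M_B$ could strictly contain the true resonance eigenspace, and Theorem~\ref{thm:general} would no longer compute the correct dimension of the residue correction.
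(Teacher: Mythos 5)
Your proposal is correct and follows exactly the route the paper intends: identify the space $R(G,\lambda)$ of Proposition~\ref{prop:M} with the full resonance eigenspace via Lemma~\ref{lem:billig}, insert the dimension formula of Theorem~\ref{thm:general}, and read off both assertions from the resulting identity together with the fact that $\lambda$ is a pole of $M_B$ precisely when $\rank \Res_\lambda M_B > 0$. The paper gives no separate proof beyond declaring the theorem a direct consequence of these ingredients, and your write-up supplies precisely the missing details.
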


Note that the theorem covers all possible cases since $\beta_1 (G_\lambda) \geq \beta_0^{\rm odd} (G_\lambda)$ is always true. Moreover, note that the condition $\beta_1 (G_\lambda) = \beta_0^{\rm odd} (G_\lambda)$ in (i) means that each connected component of $G_\lambda$ may at most contain one cycle and, if so, this cycle needs to have odd length w.r.t.~$\pi / \sqrt{\lambda}$.

The following example shows that it may in fact happen that an eigenvalue $\lambda$ of $- \Delta_G$ is invisible for $M_B$, that is, $\lambda$ is a removable singularity of $M_B$, even for $B = V$.

\begin{example}\label{ex:loop}
Consider the metric graph in Figure~\ref{fig:loop} which consists of a loop and a further edge attached to it. We assume $L (e_1) = \pi$ and $L (e_2) = 1$. 
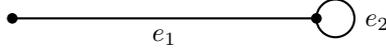
\begin{figure}[h]
\begin{center}
\begin{tikzpicture}
\pgfsetlinewidth{0.8pt}
\color{black}
\draw (4.25,0) circle (2.5mm);
\fill (4,0) circle (2pt);
\fill (0,0) circle (2pt);
\pgfxyline(0,0)(4,0)
\pgfputat{\pgfxy(2.0,-0.3)}{\pgfbox[center,base]{$e_1$}}
\pgfputat{\pgfxy(4.8,-0.1)}{\pgfbox[center,base]{$e_2$}}
\end{tikzpicture}
\end{center}
\caption{The metric graph~$G$ in Example~\ref{ex:loop}.}
\label{fig:loop}
\end{figure}
Then an application of Theorem~\ref{thm:general} yields that the smallest real resonance of $G$ is given by
\begin{align*}
 \lambda = 4 \pi^2.
\end{align*}
Furthermore, a simple calculation yields that the corresponding eigenspace of $- \Delta_G$ is one-dimensional and is spanned by the function $f$ with
\begin{align*}
 f_{e_1} (x) = 0, \quad x \in e_1, \qquad \text{and} \qquad f_{e_2} (x) = \sin (2 \pi x), \quad x \in e_2.
\end{align*}
In particular, we have 
\begin{align*}
 R (G, \lambda) = \spann \{f\} = \ker (- \Delta_G - \lambda)
\end{align*}
and Proposition~\ref{prop:M} yields $\rank \Res_\lambda M_B = 0$ for any (nonempty) choice of $B$. Hence the eigenvalue $\lambda$ is a removable singularity of $M_B$ or, in other words, it is invisible for~$M_B$.
\end{example}

We close this note with noting a consequence of Theorem~\ref{thm:visibility} and Corollary~\ref{cor:soGehtsAuch}.

\begin{corollary}
Let $G$ be a finite metric graph and let $B \subset V$ be a set of vertices which contains all boundary vertices and all proper core vertices of $G$. Define $\lambda_G$ as in Corollary~\ref{cor:soGehtsAuch} and let $\lambda < \lambda_G$. Then $\lambda$ is an eigenvalue of $- \Delta_G$ if and only if $\lambda$ is a pole of~$M_B$. Moreover,
\begin{align*}\
 \dim \ker (- \Delta_G - \lambda) = \rank \Res_\lambda M_B
\end{align*}
holds.
\end{corollary}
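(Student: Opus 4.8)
The plan is to reduce the statement directly to Theorem~\ref{thm:visibility}(i) by checking that its hypothesis $\beta_1 (G_\lambda) = \beta_0^{\rm odd} (G_\lambda)$ is automatically satisfied throughout the range $\lambda < \lambda_G$.

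First I would treat the principal case $0 < \lambda < \lambda_G$. Here I would reuse the reasoning already carried out in the proof of Corollary~\ref{cor:soGehtsAuch}: for $\lambda < \lambda_G$ one has $\pi / \sqrt{\lambda} > u_C$ for every cycle $C$ with commensurate edges, so no such cycle is contained in $G_\lambda$; and since $G_\lambda$ cannot contain any non-commensurate cycle either, the subgraph $G_\lambda$ contains no cycles at all. Consequently $\beta_1 (G_\lambda) = 0$, and being cycle-free it carries no cycle of odd length w.r.t.\ $\pi / \sqrt{\lambda}$, whence also $\beta_0^{\rm odd} (G_\lambda) = 0$. Thus $\beta_1 (G_\lambda) = \beta_0^{\rm odd} (G_\lambda)$, the hypothesis of Theorem~\ref{thm:visibility}(i) holds, and that assertion immediately yields both the equivalence ``$\lambda$ is an eigenvalue of $- \Delta_G$ if and only if $\lambda$ is a pole of $M_B$'' and the identity $\dim \ker (- \Delta_G - \lambda) = \rank \Res_\lambda M_B$.

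It then remains to dispose of the values $\lambda \le 0$, which fall outside the range $\lambda > 0$ covered by Theorem~\ref{thm:visibility} but are still admitted by the hypothesis $\lambda < \lambda_G$ (note that $\lambda_G > 0$ always holds). For $\lambda < 0$ the claim is vacuous: since $\sigma (- \Delta_G) \subset [0, \infty)$, such a $\lambda$ is neither an eigenvalue of $- \Delta_G$ nor, by Proposition~\ref{prop:M}, a pole of $M_B$, and both sides of the dimension identity vanish. For $\lambda = 0$ I would argue directly from Proposition~\ref{prop:M}: the eigenfunctions for the eigenvalue $0$ are constant on $G$ and hence nonzero at every vertex, so the space $R (G, 0)$ occurring in Proposition~\ref{prop:M} is trivial; the proposition then gives $\dim \ker (- \Delta_G) = \rank \Res_0 M_B$, and as this dimension is positive, $0$ is indeed a pole of $M_B$.

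Since this corollary is a straightforward combination of two already established results, I do not anticipate a genuine obstacle. The only point requiring a little care is the bookkeeping at the edge of the admissible range, namely remembering that Theorem~\ref{thm:visibility} is formulated for $\lambda > 0$ only and therefore handling $\lambda \le 0$ separately. The hypothesis on $B$ (that it contain all boundary and proper core vertices) enters only implicitly, through Lemma~\ref{lem:billig}, which is precisely what makes the notion of $R (G, \lambda)$ used in Proposition~\ref{prop:M} agree with the one in Theorem~\ref{thm:general} and hence makes Theorem~\ref{thm:visibility} applicable.
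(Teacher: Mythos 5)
Your proposal is correct and follows exactly the route the paper intends: the paper gives no separate proof but states the corollary as a consequence of Theorem~\ref{thm:visibility} and Corollary~\ref{cor:soGehtsAuch}, i.e., for $\lambda < \lambda_G$ the subgraph $G_\lambda$ is cycle-free, so $\beta_1(G_\lambda) = \beta_0^{\rm odd}(G_\lambda) = 0$ and Theorem~\ref{thm:visibility}(i) applies. Your additional bookkeeping for $\lambda \le 0$ is a sensible (and correct) completion of a case the paper leaves implicit.
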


\end{document}